\documentclass[reqno]{amsart}%[rm]{birkmult}%,a4paper
\usepackage{amssymb}
\usepackage[colorlinks=true,hypertex]{hyperref}%,pagebackref
\allowdisplaybreaks[4]
\theoremstyle{plain}
\newtheorem{thm}{Theorem}

\theoremstyle{remark}
\newtheorem{rem}{Remark}
\DeclareMathOperator{\td}{d\mspace{-2mu}}
\date{Commenced on 15 February 2009 and completed on 17 February 2009 in Melbourne}
\date{}

\begin{document}

\title[Sharpening and generalizations of Shafer's inequality]
{Sharpening and generalizations of Shafer's inequality for the arc tangent function}

\author[F. Qi]{Feng Qi}
\address[F. Qi]{Research Institute of Mathematical Inequality Theory, Henan Polytechnic University, Jiaozuo City, Henan Province, 454010, China}
\email{\href{mailto: F. Qi <qifeng618@gmail.com>}{qifeng618@gmail.com}, \href{mailto: F. Qi <qifeng618@hotmail.com>}{qifeng618@hotmail.com}, \href{mailto: F. Qi <qifeng618@qq.com>}{qifeng618@qq.com}}
\urladdr{\url{http://qifeng618.spaces.live.com}}

\author[B.-N. Guo]{Bai-Ni Guo}
\address[B.-N. Guo]{School of Mathematics and Informatics, Henan Polytechnic University, Jiaozuo City, Henan Province, 454010, China}
\email{\href{mailto: B.-N. Guo <bai.ni.guo@gmail.com>}{bai.ni.guo@gmail.com}, \href{mailto: B.-N. Guo <bai.ni.guo@hotmail.com>}{bai.ni.guo@hotmail.com}}
\urladdr{\url{http://guobaini.spaces.live.com}}

\begin{abstract}
In this paper, we sharpen and generalize Shafer's inequality for the arc tangent function. From this, some known results are refined.
\end{abstract}

\keywords{sharpening, generalization, Shafer's inequality, arc tangent function, monotonicity}

\subjclass[2000]{Primary 33B10; Secondary 26D05}

\thanks{The first author was partially supported by the China Scholarship Council}

\thanks{This paper was typeset using \AmS-\LaTeX}

\maketitle

\section{Introduction and main results}

In~\cite{Shafer-arctan-amm-66}, the following elementary problem was posed: Show that for $x>0$
\begin{equation}\label{Shafer-arctan-ineq}
\arctan x>\frac{3x}{1+2\sqrt{1+x^2}\,}.
\end{equation}
In~\cite{Shafer-arctan-amm-solutaions-67}, the following three proofs for the inequality~\eqref{Shafer-arctan-ineq} were provided.
\begin{description}
  \item[Solution by Grinstein]
Direct computation gives
$$
\frac{\td F(x)}{\td x}=\frac{\bigl(\sqrt{1+x^2}\,-1\bigr)^2} {\bigl(1+x^2\bigr)\bigl(1+2\sqrt{1+x^2}\,\bigr)^2},
$$
where
$$
F(x)=\arctan x-\frac{3x}{1+2\sqrt{1+x^2}\,}.
$$
Now $\frac{\td F(x)}{\td x}$ is positive for all $x\ne0$, whence $F(x)$ is an increasing function. Since $F(0)=0$, it follows that $F(x)>0$ for $x>0$.
  \item[Solution by Marsh]
It follows from $(\cos\phi-1)^2\ge0$ that
$$
1\ge\frac{3+6\cos\phi}{(\cos\phi+2)^2}.
$$
The desired result is obtained directly upon integration of the latter inequality with respect to $\phi$ from $0$ to $\arctan x$ for $x>0$.
  \item[Solution by Konhauser]
The substitution $x=\tan y$ transforms the given inequality into $y>\frac{3\sin y}{2+\cos y}$, which is a special case of an inequality discussed on \cite[pp.~105--106]{mit-elem-ineq-book}.
\end{description}
\par
It may be worthwhile to note that the inequality~\eqref{Shafer-arctan-ineq} is not collected in the authorized monograph~\cite{kuang-3rd} and~\cite{mit}.
\par
In~\cite[pp.~288--289]{kuang-3rd}, the following inequalities for the arc tangent function are collected:
\begin{gather}
  \arctan x<\frac{2x}{1+\sqrt{1+x^2}\,},\label{kuang-coll-arctan-1}\\
  \frac{x}{1+x^2}<\arctan x<x,\label{kuang-coll-arctan-2}\\
  x-\frac{x^3}3<\arctan x<x,\label{kuang-coll-arctan-3}\\
  \frac1{2x}\ln\bigl(1+x^2\bigr)<\arctan x<(1+x)\ln(1+x),\label{ln-arctan-ineq}
\end{gather}
where $x>0$.
\par
The aim of this paper is to sharpen and generalize inequalities~\eqref{Shafer-arctan-ineq} and~\eqref{kuang-coll-arctan-1}.
\par
Our results may be stated as the following theorems.

\begin{thm}\label{Shafer-ArcTan-thm-1}
For $x>0$, let
\begin{equation}
f_a(x)=\frac{\bigl(a+\sqrt{1+x^2}\,\bigr)\arctan x}x,
\end{equation}
where $a$ is a real number.
\begin{enumerate}
\item
When $a\le-1$ or $0\le a\le\frac12$, the function $f_a(x)$ is strictly increasing on $(0,\infty)$;
\item
When $a\ge\frac2\pi$, the function $f_a(x)$ is strictly decreasing on $(0,\infty)$;
\item
When $\frac12<a<\frac2\pi$, the function $f_a(x)$ has a unique minimum on $(0,\infty)$.
\end{enumerate}
\end{thm}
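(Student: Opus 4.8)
The plan is to reduce the monotonicity of $f_a$ to the sign of its numerator after differentiation, peeling off one factor of $\arctan x$ at each stage. Writing $s=\sqrt{1+x^2}\,$, a direct computation should give $x^2s^2f_a'(x)=h_a(x)$ with $h_a(x)=x(a+s)-s(1+as)\arctan x$; since $x^2s^2>0$, the sign of $f_a'$ equals that of $h_a$, and $h_a(0)=0$. Differentiating once more, I expect the clean identity $h_a'(x)=\frac{x}{s}\,p_a(x)$ with $p_a(x)=2x-(1+2as)\arctan x$, so that for $x>0$ the sign of $h_a'$ equals that of $p_a$, and again $p_a(0)=0$. Thus the whole problem is governed by the sign of $p_a$ together with the two base values $h_a(0)=p_a(0)=0$.

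The decisive step is to differentiate $p_a$ and note that the $\arctan x$ disappears after multiplying by $s^2$: I expect $(1+x^2)p_a'(x)=q_a(x)$ with $q_a(x)=(1+2x^2)-2a\,s\,(1+x\arctan x)$. The point is that $q_a(x)$ is \emph{affine and strictly decreasing in the parameter $a$}, because $s(1+x\arctan x)>0$ for $x>0$. Hence for each fixed $x>0$ one has $p_a'(x)>0$ if and only if $a<\phi(x)$, where
\[
\phi(x)=\frac{1+2x^2}{2\sqrt{1+x^2}\,(1+x\arctan x)}.
\]
A short Taylor/limit computation should give $\phi(0^+)=\frac12$ and $\phi(x)\to\frac2\pi$ as $x\to\infty$, which already explains the three thresholds $\frac12$ and $\frac2\pi$ appearing in the statement.

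The heart of the argument is the claim that $\phi$ is strictly increasing on $(0,\infty)$, and this is the step I expect to require the most care. I would prove it from the quotient rule: with $A=1+2x^2$ and $B=2s(1+x\arctan x)$, the sign of $\phi'$ is that of $A'B-AB'$, and after computing $B'$, substituting $s^2=1+x^2$, and collecting the coefficient of $\arctan x$, I anticipate the dramatic collapse $A'B-AB'=\frac{2}{\sqrt{1+x^2}}\bigl(2x-\arctan x\bigr)$, where the quartic terms must cancel exactly. Since $\arctan x<x<2x$ for $x>0$ by~\eqref{kuang-coll-arctan-2}, this is positive, so $\phi'>0$ and $\phi$ increases strictly from $\frac12$ to $\frac2\pi$. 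The only real obstacle is performing this reduction without error; once it is in hand, the rest is sign-tracking.

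It then remains to assemble the cases by propagating signs along $\phi\rightsquigarrow p_a'\rightsquigarrow p_a\rightsquigarrow h_a'\rightsquigarrow h_a\rightsquigarrow f_a'$. If $a\le\frac12$, then $a\le\frac12=\phi(0^+)<\phi(x)$ forces $p_a'>0$ throughout (for $a\le0$ one sees $q_a\ge1+2x^2>0$ directly), hence $p_a>0$, hence $h_a>0$, hence $f_a'>0$, giving the increasing case~(1). If $a\ge\frac2\pi$, then $a>\phi(x)$ throughout gives $p_a'<0$, so $p_a<0$, $h_a<0$, $f_a'<0$, giving the decreasing case~(2). If $\frac12<a<\frac2\pi$, strict monotonicity of $\phi$ yields a unique $x_1$ with $\phi(x_1)=a$, so $p_a'$ is negative then positive; together with $p_a(0)=0$ and $\lim_{x\to\infty}p_a(x)=+\infty$ (valid since $a<\frac2\pi$) this makes $p_a$ vanish at a single $x_2>x_1$, negative before and positive after. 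Applying the same ``starts at $0$, one sign change, tends to $+\infty$'' pattern to $h_a$ (again $h_a\to+\infty$ because $1-\frac{\pi}{2}a>0$) shows $h_a$ has a unique zero $x_3$, negative then positive. Therefore $f_a'$ is negative then positive, i.e.\ $f_a$ has a unique minimum on $(0,\infty)$, which is case~(3).
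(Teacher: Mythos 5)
Your proposal is correct, and every one of the computational identities you anticipated does hold: writing $s=\sqrt{1+x^2}$, one checks $x^2s^2f_a'(x)=h_a(x)=x(a+s)-s(1+as)\arctan x$, then $h_a'(x)=\frac{x}{s}\bigl(2x-(1+2as)\arctan x\bigr)$, then $(1+x^2)p_a'(x)=(1+2x^2)-2as(1+x\arctan x)$, and finally the collapse $A'B-AB'=\frac{2}{s}(2x-\arctan x)>0$, so $\phi$ does increase strictly from $\frac12$ to $\frac2\pi$ and your sign-propagation through the three cases goes through exactly as you described. Your route is, however, genuinely different from the paper's. The paper factors $f_a'(x)=\frac{(1+x^2)(1+a\sqrt{1+x^2}\,)}{x^2(1+x^2)^{3/2}}g_a(x)$ with $g_a(x)=\frac{x+x^3+ax\sqrt{1+x^2}}{(1+x^2)(1+a\sqrt{1+x^2})}-\arctan x$, i.e.\ it divides out the factor $1+a\sqrt{1+x^2}$; the payoff is that a \emph{single} differentiation of $g_a$ eliminates the arctangent entirely, leaving the purely algebraic quantity $2a^2\sqrt{1+x^2}+a-\sqrt{1+x^2}$, a quadratic in $a$ whose roots $a_1(x),a_2(x)$ are trivially monotone with limits $-1,-\frac{\sqrt2}{2}$ and $\frac12,\frac{\sqrt2}{2}$; the threshold $\frac2\pi$ then enters separately, through $\lim_{x\to\infty}g_a(x)=\frac1a-\frac\pi2$. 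You instead never divide by $1+a\sqrt{1+x^2}$, differentiate two levels deeper, and obtain both thresholds $\frac12$ and $\frac2\pi$ as the two endpoint values of one threshold function $\phi$; the price is that $\phi$ still contains $\arctan x$, so its monotonicity needs your quotient-rule lemma, which is the only nontrivial computation in your argument but which you stated correctly. What your approach buys is worth noting: since $1+a\sqrt{1+x^2}$ vanishes at some $x>0$ precisely when $-1<a<0$, the paper's decomposition breaks down on that range, which is why the theorem (and the paper's proof) excludes it; your argument for case (1) works verbatim for every $a\le\frac12$, so you have actually proved the stronger statement that $f_a$ is strictly increasing on $(0,\infty)$ for all $a\le\frac12$, filling the gap $-1<a<0$ left open by the theorem.
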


As direct consequences of Theorem~\ref{Shafer-ArcTan-thm-1}, the following inequalities may be derived.

\begin{thm}\label{Shafer-ArcTan-thm-2}
For $0\le a\le\frac12$,
\begin{equation}\label{Shafer-ArcTan-thm-2-ineq-1}
\frac{(1+a)x}{a+\sqrt{1+x^2}\,}<\arctan x<\frac{(\pi/2)x}{a+\sqrt{1+x^2}\,}, \quad x>0.
\end{equation}
For $\frac12<a<\frac2\pi$,
\begin{equation}\label{Shafer-ArcTan-thm-2-ineq-2}
\frac{4a\bigl(1-a^2\bigr)x}{a+\sqrt{1+x^2}\,} <\arctan x <\frac{\max\{(\pi/2),1+a\}x}{a+\sqrt{1+x^2}\,}, \quad x>0.
\end{equation}
For $a\ge\frac2\pi$, the inequality~\eqref{Shafer-ArcTan-thm-2-ineq-1} is reversed.
\par
Moreover, the constants $1+a$ and $\frac\pi2$ in inequalities\eqref{Shafer-ArcTan-thm-2-ineq-1} and~\eqref{Shafer-ArcTan-thm-2-ineq-2} are the best possible.
\end{thm}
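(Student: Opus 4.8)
The plan is to read all three inequalities directly off the monotonicity trichotomy of Theorem~\ref{Shafer-ArcTan-thm-1}, after recording the two boundary values of $f_a$. First I would compute $\lim_{x\to0^+}f_a(x)=1+a$ (from $\frac{\arctan x}{x}\to1$ and $\sqrt{1+x^2}\to1$) and $\lim_{x\to\infty}f_a(x)=\frac{\pi}{2}$ (from $\frac{\sqrt{1+x^2}}{x}\to1$, $\frac{a}{x}\to0$, and $\arctan x\to\frac{\pi}{2}$). Since every range in Theorem~\ref{Shafer-ArcTan-thm-2} has $a\ge0$, we have $a+\sqrt{1+x^2}\ge1>0$, so multiplying any two-sided bound on $f_a(x)$ through by $\frac{x}{a+\sqrt{1+x^2}}$ preserves it and turns a bound on $f_a$ into a bound on $\arctan x$. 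Thus the whole theorem reduces to locating the infimum and supremum of $f_a$ on $(0,\infty)$ in each case.

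For $0\le a\le\frac12$, Theorem~\ref{Shafer-ArcTan-thm-1}(1) gives $f_a$ strictly increasing, hence $1+a<f_a(x)<\frac{\pi}{2}$ for $x>0$; rearranging yields \eqref{Shafer-ArcTan-thm-2-ineq-1}. For $a\ge\frac2\pi$, part (2) gives $f_a$ strictly decreasing, so the same two values bound $f_a$ in the opposite order (here $1+a>\frac{\pi}{2}$), which is the asserted reversal. The only genuine work is the middle range $\frac12<a<\frac2\pi$, where part (3) provides a unique interior minimum $m$. Since $f_a$ descends from the limit $1+a$ to $m$ and then rises to the limit $\frac{\pi}{2}$, neither limit being attained, its supremum is $\max\{1+a,\frac{\pi}{2}\}$, giving the strict upper bound in \eqref{Shafer-ArcTan-thm-2-ineq-2}; the lower bound requires estimating $m$.

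To estimate $m$ I would use the critical-point equation. A short computation gives $f_a'(x)=\dfrac{(a+u)x-u(au+1)\arctan x}{x^2u^2}$ with $u=\sqrt{1+x^2}$, so the sign of $f_a'$ is that of $(a+u)x-u(au+1)\arctan x$. At the minimizer $x_1$, writing $u_1=\sqrt{1+x_1^2}$, one has $\arctan x_1=\frac{(a+u_1)x_1}{u_1(au_1+1)}$, and substituting this back collapses the minimum value to the purely algebraic quantity $m=g(u_1)$, where $g(u)=\dfrac{(a+u)^2}{u(au+1)}$. A one-variable minimization of $g$ over $u>1$ produces a unique critical point $u^{\ast}=\frac{a}{1-2a^2}$ (which exceeds $1$ precisely when $a>\frac12$) and the value $g(u^{\ast})=4a(1-a^2)$. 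Hence $m=g(u_1)\ge g(u^{\ast})=4a(1-a^2)$, which after the rearrangement of the first paragraph is the lower bound in \eqref{Shafer-ArcTan-thm-2-ineq-2}.

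The main obstacle is upgrading this to the strict inequality, i.e.\ showing $m>4a(1-a^2)$, which holds iff $u_1\ne u^{\ast}$. I would settle this by computing the sign of $f_a'$ at the $g$-minimizer $x^{\ast}=\sqrt{(u^{\ast})^2-1}$: one checks $\frac{a+u^{\ast}}{u^{\ast}(au^{\ast}+1)}=\frac{2a}{u^{\ast}}$, so $f_a'(x^{\ast})<0$ is equivalent to $\frac{\sqrt{1+(x^{\ast})^2}\,\arctan x^{\ast}}{x^{\ast}}>2a$, that is, to $f_0(x^{\ast})>2a$, an inequality for the $a=0$ member of the family (which is increasing by part (1)). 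Establishing $f_0(x^{\ast})>2a$ throughout $\bigl(\frac12,\frac2\pi\bigr)$ forces $x^{\ast}<x_1$, hence $u^{\ast}<u_1$ and $m=g(u_1)>g(u^{\ast})$, delivering the strict lower bound. Finally, the best-possibility of $1+a$ and $\frac{\pi}{2}$ is immediate from the first paragraph: these are the exact one-sided limits of $f_a$ as $x\to0^+$ and $x\to\infty$, so neither constant can be moved inward without failing near the corresponding endpoint.
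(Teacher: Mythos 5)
Your proposal follows the same route as the paper's proof: the two limits $\lim_{x\to0^+}f_a(x)=1+a$ and $\lim_{x\to\infty}f_a(x)=\frac\pi2$, combined with the monotonicity trichotomy of Theorem~\ref{Shafer-ArcTan-thm-1}, dispose of \eqref{Shafer-ArcTan-thm-2-ineq-1}, its reversal for $a\ge\frac2\pi$, the upper bound in \eqref{Shafer-ArcTan-thm-2-ineq-2}, and the sharpness of the constants; and for the lower bound in \eqref{Shafer-ArcTan-thm-2-ineq-2} you use the critical-point equation to collapse the minimum value to $g(u_1)=\frac{(a+u_1)^2}{u_1(1+au_1)}$, exactly as the paper does. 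In fact you are more careful than the paper at the decisive point: the paper simply asserts $\frac{(a+u)^2}{u(1+au)}>4a\bigl(1-a^2\bigr)$ for $u\in(1,\infty)$, whereas you correctly observe that the infimum of this expression over $u>1$ is exactly $4a\bigl(1-a^2\bigr)$, attained at the interior point $u^{\ast}=\frac a{1-2a^2}\in(1,\infty)$ when $a>\frac12$, so the strict inequality genuinely requires ruling out $u_1=u^{\ast}$.

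However, that is precisely where your proposal stops short: the claim $f_0(x^{\ast})>2a$ for all $a\in\bigl(\frac12,\frac2\pi\bigr)$ is stated as the remaining task but never proved, and the one tool you invoke (that $f_0$ is increasing, by part (1)) cannot do it, since it only yields $f_0(x^{\ast})>\lim_{x\to0^+}f_0(x)=1$ while $2a>1$ on the whole range in question; near $a=\frac12$ the margin $f_0(x^{\ast})-2a$ is of second order in $a-\frac12$, so the inequality is genuinely delicate. Two ways to close the gap. (i) Structural: $x^{\ast}$ is exactly the unique zero of $h_a$ in the proof of Theorem~\ref{Shafer-ArcTan-thm-1}, i.e.\ the solution of $(1-2a^2)\sqrt{1+x^2}=a$, hence the unique minimizer of $g_a$; since $g_a(0^+)=0$ and $g_a$ is strictly decreasing on $(0,x^{\ast})$, we get $g_a(x^{\ast})<0$, and since $f_a'$ has the sign of $g_a$ (the prefactor is positive for $a\ge0$), it follows that $f_a'(x^{\ast})<0$, hence $x^{\ast}<x_1$, $u^{\ast}<u_1$, and $g(u_1)>g(u^{\ast})=4a\bigl(1-a^2\bigr)$ because $g$ is strictly increasing on $(u^{\ast},\infty)$. (ii) Self-contained within your own framework: apply the already-proved case $a=\frac12$ of \eqref{Shafer-ArcTan-thm-2-ineq-1} (Shafer's inequality) at the point $x^{\ast}$ to get $f_0(x^{\ast})>\frac{3u^{\ast}}{1+2u^{\ast}}=\frac{3a}{1+2a-2a^2}$, and then note that $\frac{3a}{1+2a-2a^2}\ge 2a$ reduces to $(2a-1)^2\ge0$. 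With either patch your argument is complete, and—apart from this point, which the paper itself glosses over—it coincides with the published proof.
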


\section{Remarks}

Before proving our theorems, we are about to give several remarks on them.

\begin{rem}
The inequality~\eqref{Shafer-arctan-ineq} is the special case $a=\frac12$ of the left-hand side inequality in~\eqref{Shafer-ArcTan-thm-2-ineq-1}.
\end{rem}

\begin{rem}
The inequality~\eqref{kuang-coll-arctan-1} is the special case $a=1$ of the reversed version of the left hand-side inequality in~\eqref{Shafer-ArcTan-thm-2-ineq-1}.
\end{rem}

\begin{rem}
The inequality~\eqref{kuang-coll-arctan-1} is better than~\eqref{ln-arctan-ineq}. If taking $a=\frac2\pi$ in~\eqref{Shafer-ArcTan-thm-2-ineq-1}, then
\begin{equation}\label{Shafer-ArcTan-thm-2-ineq-2pi}
\frac{\pi^2x}{2+2\pi\sqrt{1+x^2}\,}<\arctan x<\frac{(\pi+2)x}{2+\pi\sqrt{1+x^2}\,}, \quad x>0.
\end{equation}
This double inequality refines corresponding ones in~\eqref{Shafer-arctan-ineq}, \eqref{kuang-coll-arctan-1}, \eqref{kuang-coll-arctan-2}, \eqref{kuang-coll-arctan-3} and \eqref{ln-arctan-ineq}.
\end{rem}

\begin{rem}
The substitution $x=\tan y$ may transform inequalities in~\eqref{Shafer-ArcTan-thm-2-ineq-1} and~\eqref{Shafer-ArcTan-thm-2-ineq-2} into some trigonometric inequalities.
\end{rem}

\begin{rem}
The approach below used in the proofs of Theorem~\ref{Shafer-ArcTan-thm-1} and Theorem~\ref{Shafer-ArcTan-thm-2} has been employed in~\cite{Carlson-Arccos.tex, Oppeheim-Sin-Cos.tex, Shafer-ArcSin.tex, Shafer-Fink-ArcSin.tex}.
\end{rem}

\section{Proofs of theorems}

Now we are in a position to prove our theorems.

\begin{proof}[Proof of Theorem~\ref{Shafer-ArcTan-thm-1}]
Direct calculation gives
\begin{align*}
f'_a(x)&=\frac{\bigl(1+x^2\bigr)\bigl(1+a\sqrt{1+x^2}\,\bigr)}{x^2\bigl(1+x^2\bigr)^{3/2}} \Biggl[\frac{x+x^3+ax\sqrt{1+x^2}\,}{\bigl(1+x^2\bigr)\bigl(1+a\sqrt{1+x^2}\,\bigr)}-\arctan x\Biggr]\\
&\triangleq\frac{\bigl(1+x^2\bigr)\bigl(1+a\sqrt{1+x^2}\,\bigr)}{x^2\bigl(1+x^2\bigr)^{3/2}}g_a(x),\\
g'_a(x)&=-\frac{x^2 \bigl(2a^2\sqrt{x^2+1}\,+a-\sqrt{x^2+1}\,\bigr)} {\bigl(x^2+1\bigr)^{3/2}\bigl(a\sqrt{x^2+1}\,+1\bigr)^2}\\
&\triangleq -\frac{x^2h_a(x)}{\bigl(x^2+1\bigr)^{3/2}\bigl(a\sqrt{x^2+1}\,+1\bigr)^2},
\end{align*}
and the function $h_a(x)$ has two zeros
$$
a_1(x)=-\frac{1+\sqrt{9+8x^2}\,}{4\sqrt{1+x^2}\,}\quad\text{and}\quad a_2(x)=\frac{-1+\sqrt{9+8x^2}\,}{4\sqrt{1+x^2}\,}.
$$
Further differentiation yields
\begin{gather*}
a_1'(x)=\frac{x\bigl(1+\sqrt{9+8x^2}\,\bigr)}{4(1+x^2)^{3/2}\sqrt{9+8x^2}\,}>0
\end{gather*}
and
\begin{gather*}
a_2'(x)=\frac{x\bigl(\sqrt{9+8x^2}\,-1\bigr)}{4(1+x^2)^{3/2}\sqrt{9+8x^2}\,}>0.
\end{gather*}
This means that the functions $a_1(x)$ and $a_2(x)$ are increasing on $(0,\infty)$. From
\begin{align*}
\lim_{x\to0^+}a_1(x)&=-1,& \lim_{x\to\infty}a_1(x)&=-\frac{\sqrt2\,}2,\\
\lim_{x\to0^+}a_2(x)&=\frac12,& \lim_{x\to\infty}a_2(x)&=\frac{\sqrt2\,}2,
\end{align*}
it follows that
\begin{enumerate}
\item
when $a\le-1$ or $a\ge\frac{\sqrt2\,}2$, the derivative $g'_a(x)$ is negative and the function $g_a(x)$ is strictly decreasing on $(0,\infty)$. From
\begin{equation}\label{g-a-limits}
\lim_{x\to0^+}g_a(x)=0\quad\text{and}\quad \lim_{x\to\infty}g_a(x)=\frac1a-\frac\pi2,
\end{equation}
it is deduced that $g_a(x)<0$ on $(0,\infty)$. Accordingly,
\begin{enumerate}
\item
when $a\le-1$, the derivative $f_a'(x)>0$ and the function $f_a(x)$ is strictly increasing on $(0,\infty)$;
\item
when $a\ge\frac{\sqrt2\,}2$, the derivative $f_a'(x)$ is negative and the function $f_a(x)$ is strictly decreasing on $(0,\infty)$.
\end{enumerate}
\item
when $\frac12\ge a\ge0$, the derivative $g'_a(x)$ is positive and the function $g_a(x)$ is increasing on $(0,\infty)$. By~\eqref{g-a-limits}, it follows that the function $g_a(x)$ is positive on $(0,\infty)$. Thus, the derivative $f_a'(x)$ is positive and the function $f_a(x)$ is strictly increasing on $(0,\infty)$.
\item
when $\frac12<a<\frac{\sqrt2\,}2$, the derivative $g'_a(x)$ has a unique zero which is a minimum of $g_a(x)$ on $(0,\infty)$. Hence, by the second limit in~\eqref{g-a-limits}, it may be deduced that
\begin{enumerate}
\item
when $\frac2\pi\le a<\frac{\sqrt2\,}2$, the function $g_a(x)$ is negative on $(0,\infty)$, so the derivative $f_a'(x)$ is also negative and the function $f_a(x)$ is strictly decreasing on $(0,\infty)$;
\item
when $\frac12<a<\frac2\pi$, the function $g_a(x)$ has a unique zero which is also a unique zero of the derivative $f_a'(x)$, and so the function $f_a(x)$ has a unique minimum of the function $f_a(x)$ on $(0,\infty)$.
\end{enumerate}
\end{enumerate}
The proof of Theorem~\ref{Shafer-ArcTan-thm-1} is complete.
\end{proof}

\begin{proof}[Proof of Theorem~\ref{Shafer-ArcTan-thm-2}]
Direct calculation yields
$$
\lim_{x\to0^+}f_a(x)=1+a \quad \text{and}\quad \lim_{x\to\infty}f_a(x)=\frac\pi2.
$$
By the increasing monotonicity in Theorem~\ref{Shafer-ArcTan-thm-1}, it follows that $1+a<f_a(x)<\frac\pi2$ for $0\le a\le\frac12$, which can be rewritten as~\eqref{Shafer-ArcTan-thm-2-ineq-1}. Similarly, the reversed version of the inequality~\eqref{Shafer-ArcTan-thm-2-ineq-1} and the right-hand side inequality in~\eqref{Shafer-ArcTan-thm-2-ineq-2} can be procured.
\par
When $\frac12<a<\frac2\pi$, the unique minimum point $x_0\in(0,\infty)$ of the function $f_a(x)$ satisfies
$$
\arctan x_0=\frac{x_0+x_0^3+ax_0\sqrt{1+x_0^2}\,}{\bigl(1+x_0^2\bigr)\bigl(1+a\sqrt{1+x_0^2}\,\bigr)},
$$
and so the minimum of $f_a(x)$ on $(0,\infty)$ is
\begin{align*}
f_a(x_0)&=\frac{x_0+x_0^3+ax_0\sqrt{1+x_0^2}\,}{\bigl(1+x_0^2\bigr)\bigl(1+a\sqrt{1+x_0^2}\,\bigr)} \cdot\frac{a+\sqrt{1+x_0^2}\,}{x_0} \\
&=\frac{\bigl(a+\sqrt{1+x_0^2}\,\bigr)\bigl(1+x_0^2+a\sqrt{1+x_0^2}\,\bigr)} {\bigl(1+x_0^2\bigr)\bigl(1+a\sqrt{1+x_0^2}\,\bigr)}\\
&=\frac{(a+u)^2} {u\bigl(1+au\bigr)},\quad u=\sqrt{1+x_0^2}\,\in(1,\infty)\\
&>4a\bigl(1-a^2\bigr),
\end{align*}
as a result, the left-hand side inequality in~\eqref{Shafer-ArcTan-thm-2-ineq-2} follows. The proof of Theorem~\ref{Shafer-ArcTan-thm-2} is complete.
\end{proof}

\end{document}